\titleformat{\subsubsection}[runin] {\normalfont\bfseries}{\thesubsubsection}{0.7em}{\addperiod}
\newcommand{\addperiod}[1]{#1.}
\titlespacing\subsubsection{5pt}{4pt plus 4pt minus 2pt}{5pt plus 2pt minus 2pt}
\newtheorem{theorem}{Theorem}[section]
\newtheorem{lemma}[theorem]{Lemma}
\theoremstyle{remark}
\newtheorem{remark}[theorem]{Remark}
\newcommand{\N}{\mathbb{N}}
\newcommand{\Z}{\mathbb{Z}}
\newcommand{\R}{\mathbb{R}}
\newcommand{\E}{\mathbb{E}}
\renewcommand{\P}{\mathbb{P}}
\newcommand{\eps}{\varepsilon}
\begin{document}
\title{\underline{\textsc{Large, Lengthy Graphs Look Locally Like Lines}}}
\author{\underline{Itai Benjamini and Tom Hutchcroft}}

\AtEndDocument{
  \bigskip
  \small
  \par
  \textsc{Itai Benjamini, Weizmann Institute of Science} \par
  \textit{Email:} \texttt{itai.benjamini@weizmann.ac.il}\par

\medskip

  \textsc{Tom Hutchcroft, Statslab, DPMMS, University of Cambridge} \par
  \textit{Email:} \texttt{t.hutchcroft@maths.cam.ac.uk} \par

}

\date{\underline{May 9, 2019}}

\maketitle

\begin{abstract}
We apply the theory of unimodular random rooted graphs to study the metric geometry of large, finite, bounded degree graphs whose diameter is proportional to their volume. We prove that for a positive proportion of the vertices of such a graph, there exists a mesoscopic scale on which the graph looks like $\R$ in the sense that the rescaled ball is close to a line segment in the Gromov-Hausdorff metric.
\end{abstract}


\section{{Introduction}}

The aim of this modest note is to prove that large graphs with diameter proportional to their volume must `look like $\R$' from the perspective of a positive proportion of their vertices, after some rescaling that may depend on the choice of vertex. We write $d^\mathrm{loc}_\mathrm{GH}$ for the \emph{local Gromov-Hausdorff metric}, a measure of similarity between locally compact pointed metric spaces that we define in detail in Section~\ref{sec:GHbackground}.

\begin{theorem}
\label{thm:GH}
Let $(G_n)_{n\geq 1}=((V_n,E_n))_{n\geq 1}$ be a sequence of finite, connected graphs with $|V_n|\to\infty$, and  suppose that there exists a constant $C<\infty$ such that $|V_n| \leq C \operatorname{diam}(G_n)$ 
for every $n\geq 1$. Suppose furthermore that the set of degree distributions of the graphs $G_n$ are uniformly integrable. Then there exists a 
 sequence of subsets $A_n \subseteq V_n$ with $\liminf_{n\to\infty}|A_n|/|V_n| \geq C^{-1}$ such that
\[
\lim_{n\to\infty} \sup_{v\in A_n} \inf_{\eps>0} d^\mathrm{loc}_\mathrm{GH}\biggl(\Bigl(V_n,\, \eps d_{G_n},\, v \Bigr),\, \Bigl(\R,\, d_\R,\, 0 \Bigr)\biggr) =0,
\]
where we write $d_{G_n}$ for the graph metric on $G_n$ and $d_\R(x,y)=|x-y|$ for the usual metric on $\R$.
\end{theorem}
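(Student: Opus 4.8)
The plan is to pass to a local weak limit, transfer the lengthy hypothesis to the limit through a single long geodesic, and read off the conclusion from a structural property of unimodular random rooted graphs. Write $\mu_n$ for the law of $(G_n,\rho_n)$ with $\rho_n$ a uniformly chosen root, and for a rooted graph $(G,v)$ set $\Phi(G,v)=\inf_{\eps>0} d^{\mathrm{loc}}_{\mathrm{GH}}\bigl((V,\eps d_G,v),(\R,d_\R,0)\bigr)$. Taking $A_n=\{v:\Phi(G_n,v)\le \delta_n\}$ for a sequence $\delta_n\downarrow 0$ to be chosen, the bound $\sup_{v\in A_n}\Phi(G_n,v)\le\delta_n\to 0$ is automatic, so the theorem reduces to the density statement $\liminf_n |A_n|/|V_n|\ge C^{-1}$; by a subsequence-and-contradiction argument it suffices to prove $\liminf_n \mu_n(\Phi\le\delta)\ge C^{-1}$ for each fixed $\delta>0$.

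First I would build the limit. Uniform integrability of the degree distributions makes $(\mu_n)$ tight in the local topology, so along a subsequence $\mu_n\to\mu$, the law of a unimodular random rooted graph $(G,\rho)$, unimodularity being automatic for local weak limits of finite graphs. To feed in the lengthy hypothesis, fix a geodesic $\gamma_n$ of length $\operatorname{diam}(G_n)\ge |V_n|/C$ and observe that each of its $\ge |V_n|/C-2r$ interior vertices is the midpoint of a geodesic of length $2r$; hence the event $E_r=\{\rho\text{ is the midpoint of a geodesic of length }2r\}$, which is determined by $B_{3r}(\rho)$ and so is local, satisfies $\mu_n(E_r)\ge C^{-1}-o(1)$. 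Passing to the limit and letting $r\to\infty$ (the $E_r$ decrease, and a locally finite graph carrying geodesics of every length centred at $\rho$ carries a bi-infinite geodesic through $\rho$ by compactness) gives $\mu(E_\infty)\ge C^{-1}$, where $E_\infty=\{\rho\text{ lies on a bi-infinite geodesic}\}$.

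The heart of the matter --- and the step I expect to be the main obstacle --- is to show that $\Phi(G,\rho)=0$ for $\mu$-almost every configuration in $E_\infty$, i.e. that the rescaled balls $(B_{R/\eps}(\rho),\eps d_G,\rho)$ converge as $\eps\downarrow 0$ to the segments $([-R,R],d_\R,0)$ for every $R$. This splits into two tasks, each of which I would attack with the mass-transport principle. The first is to rule out macroscopic branching: a bi-infinite geodesic already exhibits two ends, and I would combine the dichotomy that a unimodular random rooted graph has $0,1,2$ or infinitely many ends with a mass transport detecting would-be branch points to conclude that on $E_\infty$ the number of ends is exactly two and no third macroscopic direction survives rescaling. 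The second is to control the width: the configurations to defeat are ``growing combs'', and these are exactly what the volume--diameter balance forbids. Quantitatively, transporting each vertex to its nearest point of $\gamma_n$ piles total mass $|V_n|$ onto $\ge |V_n|/C$ geodesic vertices, bounding the average fibre size by $C$; in the limit this should force, along a suitable sequence of scales $s\to\infty$, the volume of $\{u\in B_s(\rho):d(u,\Xi)\ge \delta s\}$ to be $o(s)$, where $\Xi$ is the bi-infinite geodesic, so that $B_s(\rho)$ lies in a sublinear neighbourhood of a geodesic segment. Thinness, two-endedness, and the absence of shortcuts along a geodesic together give the Gromov--Hausdorff convergence of the rescaled balls to segments.

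Finally I would transfer this back to the finite graphs. The functional $\Phi$ is not local, but for fixed $R,K$ the truncation $\Phi_{R,K}$ that only inspects scales $\eps\in[K^{-1},K]$ and radii $R'\le R$ depends on $(G,v)$ only through $B_{RK}(v)$ and is therefore local, while $\{\Phi=0\}=\bigcap_\delta\bigcup_{R,K}\{\Phi_{R,K}<\delta\}$ up to the negligible weight the local Gromov--Hausdorff metric assigns to tail radii. Given $\delta>0$ I would choose $R,K$ with $\mu(\Phi_{R,K}<\delta)\ge \mu(\Phi=0)-\delta\ge C^{-1}-\delta$, apply convergence of this local event to obtain $\liminf_n\mu_n(\Phi_{R,K}<\delta)\ge C^{-1}-\delta$, and note that on the large graphs $\Phi_{R,K}<\delta$ forces $\Phi\le\delta$, since the bounded scales involved see no finite-size effect once $|V_n|\to\infty$. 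Letting $\delta\downarrow 0$ and diagonalising yields the sets $A_n$ with the required density.
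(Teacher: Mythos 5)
Your overall architecture matches the paper's (pass to a Benjamini--Schramm limit, exploit the long geodesic, prove the limit rescales to $\R$, transfer back by locality/semicontinuity), and your route to the existence statement --- the clopen events $E_r$ together with K\"onig's lemma --- is a legitimate alternative way to show that $\rho$ lies on a bi-infinite geodesic with probability at least $C^{-1}$. But the step you yourself flag as the heart of the matter is genuinely missing, and the way you set up the limit makes it hard to fill. Your construction records only the \emph{existence} of a bi-infinite geodesic through $\rho$; it does not produce a distinguished geodesic as part of a unimodular structure. The paper instead takes the limit of the triples $(G_n,\rho_n,\gamma_n)$, with $\gamma_n$ a maximal geodesic encoded as an oriented edge-labelling, precisely because everything downstream needs the geodesic $\gamma$ to be part of the limiting object: (i) the event $\{\gamma\neq\emptyset\}$ is re-rooting invariant, so conditioning on it preserves unimodularity, whereas your $E_\infty=\{\rho\text{ lies on a bi-infinite geodesic}\}$ is not re-rooting invariant, so conditioning on it need not yield a unimodular measure and your proposed mass transports and the ends dichotomy are not directly available; and (ii) one must define the fibres $K_m=\{v : g(v)=X_m\}$ of the nearest-point projection onto $\gamma$ and show that $(|K_m|)_{m\in\Z}$ is \emph{stationary with finite mean} under the law conditioned on $\rho\in\Gamma$ --- a mass-transport computation with no analogue in your setup.

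Even granting a distinguished geodesic, your quantitative claim is of the wrong strength and the mechanism for it is absent. The volume--diameter balance (``total mass $|V_n|$ onto at least $|V_n|/C$ geodesic vertices'') bounds the \emph{average} fibre size, giving $\frac{1}{2n+1}\sum_{|m|\le n}|K_m|=O(1)$, i.e.\ linear growth; but Gromov--Hausdorff convergence to $\R$ requires the \emph{maximum} fibre to be sublinear, $\max_{|m|\le n}|K_m|=o(n)$, which the paper extracts from stationarity via Borel--Cantelli (Lemma~\ref{lem:ergodic}) and then converts into a Hausdorff estimate using $|K_m|\ge\operatorname{diam}(K_m)$. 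Nothing in your sketch supplies this upgrade; ``in the limit this should force \dots $o(s)$'' is exactly the assertion that needs proof. Likewise, two-endedness --- even if you could establish it, and note that containing a bi-infinite geodesic does not by itself bound the number of ends, cf.\ $\Z^2$ --- does not immediately yield convergence to $\R$ without the fibre control; the paper's closing remark flags that this implication requires additional work. Your final transfer-back paragraph is essentially sound, though more laborious than the paper's device of observing that $\inf_{\eps>0}\mathcal{E}_\eps$ is upper semi-continuous, so that its strict sublevel sets are open and the portmanteau theorem applies directly.
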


Here, the degree distributions of the graphs $(G_n)_{n\geq 1}$ are said to be \textbf{uniformly integrable} if for every $\eps>0$ there exists $M$ such that $\sum_{v\in V_n} \deg(v) \mathbbm{1}(\deg(v) > M) \leq \eps |V_n|$ for every $n\geq 1$.
This hypothesis holds in particular if there exists a constant $M<\infty$ such that the graphs $(G_n)_{n\geq 1}$ all have degrees bounded $M$.

\medskip

 Our result should be compared to the (much more difficult) result of the first author, Finucane, and Tessera \cite{BFT} that \emph{transitive} graphs with volume proportional to their diameter converge to the circle when rescaled by their diameter; here we have much weaker hypotheses but also a much weaker result. We remark that in \cite{BFT} it is shown more generally that every sequence of transitive graphs with volume at most \emph{polynomial} in their diameter has a subsequence converging to a torus (equipped with an \emph{invariant Finsler metric}) when rescaled by their diameters. Various further results on the scaling limits of transitive graphs satisfying polynomial growth conditions have subsequently been obtained by Tessera and Tointon \cite{tessera2017scaling}. It seems unlikely that polynomial growth assumptions such as $|V_n| = O\bigl( \operatorname{diam}(G_n)^C\bigr)$ will imply anything of comparable strength to Theorem~\ref{thm:GH} about the metric geometry of graphs without the assumption of transitivity.  
 
 \medskip

\begin{figure}
\includegraphics[width=\textwidth]{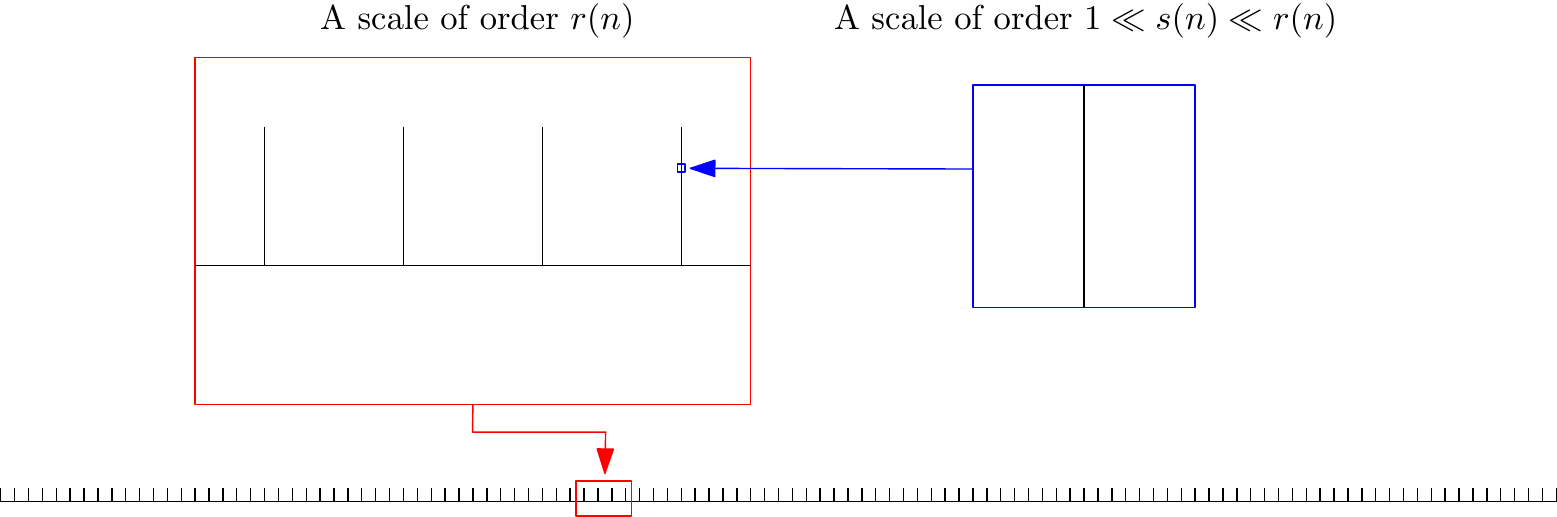}
\caption{A `comb' of length $n$ with regularly spaced `teeth' of length $r(n)$ with $1 \ll r(n)\ll n$. On the scale $r(n)$, the comb is metrically distinguishable from $r(n)$ at every point. On scales $s(n)$ satisfying $r(n) \ll s(n) \ll n$, the teeth of the comb are negligible and the comb looks like $\R$ from most of its points, with the exception of points near the left or right ends of the comb where the comb looks locally like $\R_+=[0,\infty)$. On scales $s(n)$ satisfying $1 \ll s(n) \ll r(n)$, the comb again looks like $\R$ from the perspective of most of its points, with the only exceptions being the points that are close to the top or bottom of a tooth (these bad points can be either on the teeth or the body of the comb).}
\label{fig:comb}
\end{figure}

 Note that it is not possible to control the scale on which the rescaled graph looks like $\R$. Indeed, if $r:\N\to \N$ is any function with $r(n)/n \to 0$ and $r(n)\to\infty$ as $n\to\infty$, then the graph formed by attaching $\lceil n/r(n) \rceil$ line segments of length $ \lfloor r(n)\rfloor $ to a line segment of length $n$ in a regularly spaced manner has $\operatorname{diam}(G_n) \sim n$ and $|V_n| \sim 2n$ but it metrically distinguishable from $\R$ on the scale $r(n)$ from the perspective of every vertex in the sense that
\[
\lim_{n\to\infty}\inf_{v\in V} d^\mathrm{loc}_\mathrm{GH}\biggl(\Bigl(V_n,\, r(n)^{-1} d_{G_n},\, v \Bigr),\, \Bigl(\R,\, d_\R,\, 0 \Bigr)\biggr) >0.
\]
See Figure~\ref{fig:comb}.
Note also that the hypotheses do \emph{not} allow us to take the sets $A_n$ to have $|A_n|/|V_n| \to 1$. Indeed, consider taking an $n \times n$ square grid and attaching to this grid a path of length $n^2$: The volume of this graph is about twice its diameter, the grid has about $1/2$ of the total vertices of the graph, and from a vertex of the grid the graph is metrically distinguishable from  $\R$ at every scale. See Figure~\ref{fig:gridline}. (In this example, the graph instead looks locally like $\R_+$ from the perspective of the vertices of the grid. One can construct similar examples where the graph does not look locally like either $\R$ or $\R_+$ by attaching, say, three disjoint paths to the grid instead of one.) A similar example shows that the dependence $\liminf_{n\to\infty} |A_n| / |V_n| \geq C^{-1}$ is optimal. 

\begin{figure}
\centering
\includegraphics[width=0.75\textwidth]{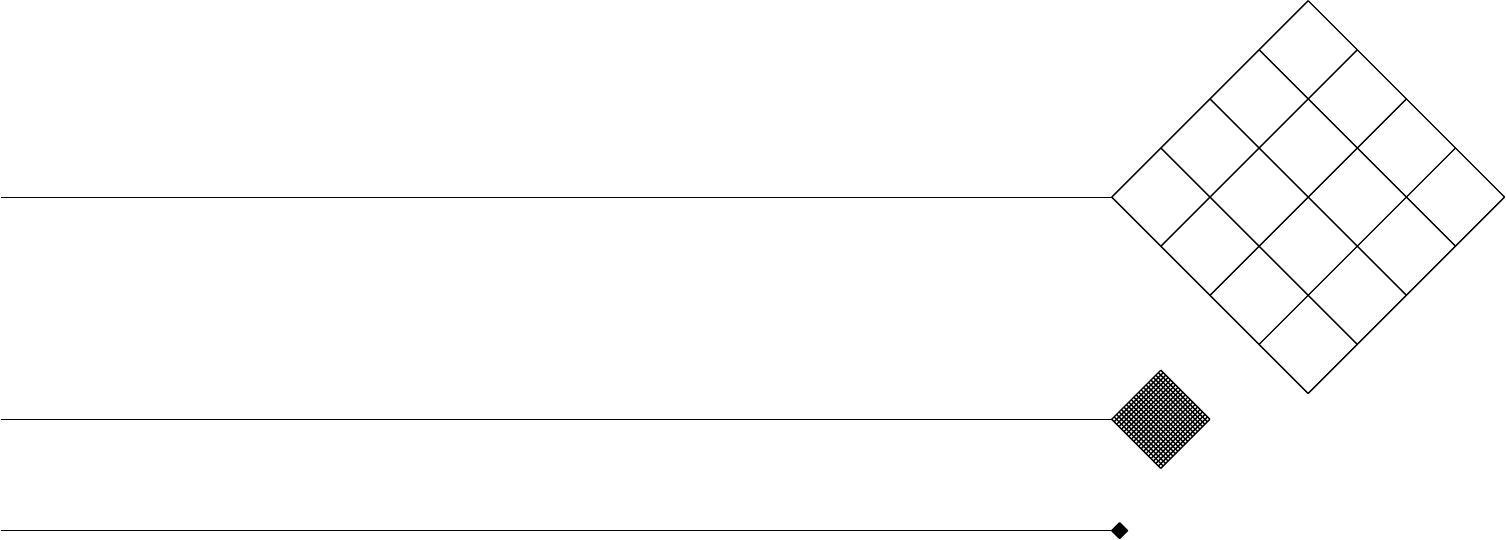}
\caption{The graphs formed by attaching a line of length $n^2$ to the corner of an $n\times n$ square grid, where $n=5,20,100$. When $n$ is large, the $n \times n$ grid is negligible from the perspective of the metric at the scales much larger than $n$ but has a constant proportion of the total volume. If we pick a point in the $n \times n$ grid uniformly at random, the graph will  look locally like $(\R^2,\|\cdot\|_1)$ on scales $1 \ll s(n) \ll n$ and like $\R_+$ on scales $n \ll s(n) \ll n^2$ with high probability. On scales of order $n$, the graph will look like a metric space formed by attaching a copy of $\R_+$ to the corner of $([0,1],\lambda \|\cdot\|_1)$, where $\lambda \in (0,\infty)$ depends on the precise choice of scale. From the perspective of a uniform random point in the line, which will not be near the ends of the line with high probability, the graph will look locally like $\R$ on all scales $1 \ll s(n) \ll n^2$.}
\label{fig:gridline}
\end{figure}

\medskip

\noindent
\textbf{Unimodular random rooted graphs and Benjamini-Schramm limits.}
Although the statement is not probabilistic, the proof of Theorem~\ref{thm:GH} will rely on probabilistic arguments in an essential way. Roughly speaking, we will use the theory of \emph{Benjamini-Schramm limits} to reduce Theorem~\ref{thm:GH} to a statement about certain probabilistic objects arising as subsequential limits in distribution of the sequence $(G_n,\rho_n)$, where $\rho_n$ is a uniform random vertex of $G_n$ for each $n\geq 1$. 


Let us now recall the relevant definitions, referring the reader to \cite{CurienNotes,AL07} for more detailed treatments. 
A \textbf{rooted graph} is a connected, locally finite graph $g$ together with a distinguished root vertex $v$. A graph isomorphism between rooted graphs is an isomorphism of rooted graphs if it preserves the root. The space of (isomorphism classes of) rooted graphs is denoted $\mathcal{G}_\bullet$, and is equipped with the \textbf{local topology}, which is induced by the metric
\[
d_\mathrm{loc}((g_1,v_1),(g_2,v_2)) = 2^{-R((g_1,v_1),(g_2,v_2))} 
\]
where $R((g_1,v_1),(g_2,v_2))$ is the supremal value of $r$ for which the balls of radius $r$ around $v_1$ and $v_2$ are isomorphic as rooted graphs. 
 The space $\mathcal{G}_\bullet$ is a Polish space (i.e., a topological space with a compatible complete metric) with respect to this topology \cite[Theorem 2]{CurienNotes}, and moreover is separable since the (isomorphism classes of) finite rooted graphs form a countable dense subset. We will typically ignore the distinction between a rooted graph and its isomorphism class when this does not cause problems.

Similarly, we define a \textbf{rooted oriented-edge-labelled} graph to be a rooted graph $(g,v)$ together with a function from the set of oriented edges of $g$ to $\{0,1\}$. (Although our graphs are undirected, we can still think of each edge as a pair of oriented edges.) We write $\mathcal{G}_\bullet^{\{0,1\}}$ for the space of isomorphism classes of rooted oriented-edge-labelled graphs, which is equipped with a local topology that is defined similarly to the unlabelled case. 
Finally, we define the spaces of \textbf{doubly-rooted graphs} $\mathcal{G}_{\bullet\bullet}$ and  
\textbf{doubly-rooted oriented-edge-labelled graphs} $\mathcal{G}^{\{0,1\}}_{\bullet\bullet}$  similarly to above except that we now have an \emph{ordered pair} of distinguished root vertices.

A probability measure $\mu$ on $\mathcal{G}_\bullet$ is said to be \textbf{unimodular} if it satisfies the \textbf{mass-transport principle}, which states that the identity\footnote{Here and elsewhere, given a random variable $X$ with law $\mu$ taking values in a set $\Omega$ and a measurable function $f:\Omega\to [0,\infty]$, we write $\mu[f(X)]$ for the expectation of the random variable $f(X)$.}
\[
\mu\left[ \sum_{v\in V} F(G,\rho,v) \right] = \mu\left[ \sum_{v\in V} F(G,v,\rho) \right]
\]
is satisfied for every measurable function $F : \mathcal{G}_{\bullet \bullet} \to [0,\infty]$, where we write $(G,\rho)$ for a random variable sampled from the measure $\mu$. Unimodular probability measures on $\mathcal{G}_\bullet^{\{0,1\}}$ are defined similarly. We think of the function $F$ as being an `automorphism equivariant' rule for sending non-negative amounts of mass between the different vertices of a graph $g$: The mass-transport principle says that for any such rule, the expected total mass the root sends out is equal to the expected total mass that the root receives. 
This notion was first introduced by Benjamini and Schramm~\cite{BeSc} and developed systematically by Aldous and Lyons \cite{AL07}. A related form of the mass-transport principle was first considered by H\"aggstr\"om in the context of Cayley graphs \cite{Haggstrom97}.

The set of unimodular probability measures on $\mathcal{G}_\bullet$ is convex and closed under the weak topology \cite[Theorem 8]{CurienNotes}. It also includes all the laws of random graphs of the form $(G,\rho)$ where $G$ is a deterministic finite connected graph and $\rho$ is a uniform random root vertex of $G$. Indeed, if we denote the law of this random rooted graph by $\mu$ and take $F:\mathcal{G}_\bullet\to[0,\infty]$ to be measurable then
\[
\mu\!\left[ \sum_{v\in V} F(G,\rho,v) \right] =
\mu\!\left[ \frac{1}{|V|}\sum_{u\in V} \sum_{v\in V} F(G,u,v) \right]
=
\mu\!\left[ \frac{1}{|V|}\sum_{u\in V} \sum_{v\in V} F(G,v,u) \right]
=
\mu\!\left[ \sum_{v\in V} F(G,v,\rho) \right]
\]
as required. Intuitively, we think of a random rooted graph $(G,\rho)$ as being unimodular if the root is `uniformly distributed on the vertex set', even though this does not make literal sense when $G$ is infinite. 
 Similar statements hold for rooted oriented-edge-labelled graphs. 

 A unimodular random rooted graph $(G,\rho)$ is said to be the \textbf{Benjamini-Schramm} limit of a sequence of finite connected graphs $(G_n)_{n\geq 1}$ if the random variables $(G_n,\rho_n)$ converge in distribution to $(G,\rho)$ when we take $\rho_n$ to be a uniform random root vertex of $G_n$ for each $n\geq 1$. Concretely, this means that for each fixed $r\geq 1$ and each fixed rooted graph $(g,u)$ we have that
 \[
\frac{1}{|V_n|} \sum_{v\in V_n} \mathbbm{1}\bigl(B_r(G_n,v)
 = B_r(g,u)\bigr)
= \P\bigl(B_r(G_n,\rho_n) = B_r(g,u)\bigr)
  \xrightarrow[n\to\infty]{} \P\bigl(B_r(G,\rho) = B_r(g,u)\bigr),
 \]
 where $B_r(g,u)$ denotes the ball of radius $r$ around $u$ in $g$, considered as a rooted graph.

The uniform integrability hypothesis in Theorem~\ref{thm:GH} is used as a compactness assumption. 
Let us now recall some of the relevant basic theory of weak convergence of probability measures. Let $\mathbb{X}$ be a metric space and let $\mathcal{P}(\mathbb{X})$ be the space of probability measures on $\mathbb{X}$, which we equip with the weak topology, i.e., the weakest topology making the maps $\mu \mapsto \mu(f)$ continuous for every bounded continuous function $f:\mathbb{X}\to \R$. (The weak topology on measures is usually called the weak-$*$ topology outside the probabilistic context.) When $\mathbb{X}$ is separable, the weak topology on $\mathcal{P}(\mathbb{X})$ is metrisable \cite[Remark 13.14.ii]{Klenkebook}. 
 Recall that weak convergence of measures and convergence in distribution of random variables are essentially the same thing: A sequence of random variables $(X_n)_{n\geq 1}$ in $\mathbb{X}$ converges in distribution to some random variable $X$ in $\mathbb{X}$ if and only if the laws $(\mu_n)_{n\geq 1}$ converge weakly to the law $\mu$ of $X$.
 Prohorov's theorem \cite[Theorem 13.29]{Klenkebook} states that if $\mathbb{X}$ is Polish then a family of probability measures $\{\mu_i :i\in I\}$ on $\mathbb{X}$ is precompact in $\mathcal{P}(\mathbb{X})$ if and only if it is \textbf{tight}, meaning that for every $\eps>0$ there exists a compact set $K \subseteq \mathbb{X}$ such that $\mu_i(\mathbb{X} \setminus K) \leq 1-\eps$ for every $i\in I$. A family of \emph{random variables} is said to be tight if the associated family of probability measures is tight.

 In our context, it is a theorem of Curien \cite[Proposition 21 and Exercise 27]{CurienNotes} that uniform integrability of the degree of the root is a sufficient (but not necessary) condition for a family of unimodular probability measures on $\mathcal{G}_\bullet$ to be tight in $\mathcal{P}(\mathcal{G}_\bullet)$. 
 Thus, the uniform integrability hypothesis of Theorem~\ref{thm:GH} ensures that if $\rho_n$ is a uniform random root vertex of $G_n$ for each $n\geq 1$ then the sequence of random rooted graphs $(G_n,\rho_n)$ is tight with respect to the local topology. Since $\mathcal{P}(\mathcal{G}_\bullet)$ is metrisable, compactness implies sequential compactness and we deduce that there exists a subsequence $\sigma(n)$ such that $G_{\sigma(n)}$ Benjamini-Schramm converges to some infinite unimodular random rooted graph $(G,\rho)$. We will use this fact to deduce Theorem~\ref{thm:GH} from the following theorem via standard compactness arguments. 

\begin{theorem}
\label{thm:BStwoends}
Let $(G_n)_{n\geq 1}=((V_n,E_n))_{n\geq 1}$ be a sequence of finite, connected graphs Benjamini-Schramm converging to some infinite random rooted graph $(G,\rho)=((V,E),\rho)$. Suppose that there exists a constant $C<\infty$ such that
$|V_n| \leq C \operatorname{diam}(G_n)$ 
for every $n\geq 1$. Then there exists an event $\Omega$ of probability at least $1/C$ on which the following hold:
\begin{enumerate}
	\item The graph $G$ is two-ended and has linear growth almost surely.
	\item 
The pointed metric space $(V,\varepsilon d_G, \rho)$ converges to $(\R,d_\R,0)$ in the local Gromov-Hausdorff topology almost surely as $\eps \downarrow 0$. 
\end{enumerate}
\end{theorem}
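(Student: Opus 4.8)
The plan is to first produce, on an event of probability at least $C^{-1}$, a bi-infinite geodesic through the root by a soft counting argument, and then to use the volume hypothesis $|V_n|\le C\operatorname{diam}(G_n)$ together with unimodularity to upgrade this skeleton to two-endedness, linear growth, and the Gromov--Hausdorff convergence. For each $r\ge1$ call a vertex $v$ of a graph $g$ \emph{$r$-geodesic} if it is the midpoint of a geodesic path of length $2r$; this property is determined by the isomorphism class of the ball $B_r(g,v)$, so its indicator is a bounded continuous (indeed locally constant) function on $\mathcal G_\bullet$. Taking $\gamma_n$ to be a geodesic realising $\operatorname{diam}(G_n)$, every vertex of $\gamma_n$ at distance at least $r$ from both endpoints is $r$-geodesic, so $\P(\rho_n\text{ is }r\text{-geodesic})\ge(\operatorname{diam}(G_n)+1-2r)/|V_n|\ge C^{-1}-2r/|V_n|$. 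Benjamini--Schramm convergence then gives $\P(\rho\text{ is }r\text{-geodesic})\ge C^{-1}$ for each fixed $r$, and since this property is decreasing in $r$, continuity of measure from above yields $\P(\rho\text{ is }r\text{-geodesic for all }r)\ge C^{-1}$. By local finiteness and a diagonal (K\"onig's lemma) argument, on this event $\rho$ lies on a bi-infinite geodesic $(\dots,x_{-1},x_0=\rho,x_1,\dots)$.

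Next I would feed the volume bound into a per-scale thinness estimate. The clean finite input is
\[
\sum_{v\in \gamma_n}|B_r(G_n,v)| \;=\; \sum_{w\in V_n}\bigl|\gamma_n\cap B_r(G_n,w)\bigr| \;\le\; (2r+1)\,|V_n|,
\]
where $|\gamma_n\cap B_r(w)|\le 2r+1$ because distinct vertices of a geodesic lying within distance $r$ of a common vertex $w$ are within geodesic-distance $2r$ of one another. Combining this with the vertex count $|\gamma_n|\ge|V_n|/C$ and Markov's inequality shows that, for fixed $K$, the proportion of $v\in\gamma_n$ that are $r$-geodesic with $|B_r(v)|\le Kr$ is at least $C^{-1}-O(1/K)$. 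Since both conditions are local, this passes to the limit:
\[
\P\bigl(\rho\text{ is }r\text{-geodesic and }|B_r(G,\rho)|\le Kr\bigr)\ \ge\ C^{-1}-O(1/K).
\]
At a fixed large scale $r$ the condition $|B_r|\le Kr$ already excludes $\Z^2$-type roots (where $|B_r|\asymp r^2$), which is precisely the behaviour exhibited by the bad vertices in the grid-plus-line example.

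The heart of the argument is to combine these per-scale estimates into a single event of probability $\ge C^{-1}$ on which the growth is linear at \emph{every} large scale. A naive union bound over dyadic scales loses a factor of $\log r$ and only yields quasi-linear growth, so instead I would exploit unimodularity to realise the environment seen from the successive geodesic points $x_k$ as a \emph{stationary} sequence along the bi-infinite geodesic. The ergodic theorem applied along the geodesic then promotes the averaged transverse bound to $|B_r(\rho)|=O(r)$ almost surely, i.e.\ genuine linear growth, and shows that the decorations hanging off the geodesic are almost surely finite; deleting a large ball around $\rho$ therefore leaves exactly the two decorated rays, so $G$ is two-ended. For the Gromov--Hausdorff statement I would then show that every vertex of $B_{R/\eps}(\rho)$ lies within $o(\eps^{-1})$ of the geodesic: by stationarity the length of the decoration attached at $x_k$ is a tight sequence, so its maximum over the $O(\eps^{-1})$ relevant indices is $o(\eps^{-1})$ by a Borel--Cantelli argument, whence $(B_{R/\eps}(\rho),\eps d_G,\rho)$ is Hausdorff-close to the rescaled geodesic segment, itself a $(1+o(1))$-approximation of $[-R,R]$. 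Letting $\eps\downarrow0$ and then $R\to\infty$ gives convergence to $(\R,d_\R,0)$ in $d^\mathrm{loc}_\mathrm{GH}$.

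I expect the main obstacle to be exactly this third step. The quantity controlled by the volume hypothesis lives on the \emph{longest} geodesic $\gamma_n$, a global and non-canonical object whose indicator is not a local function and hence does not pass to the Benjamini--Schramm limit; the local surrogate ``lies on a bi-infinite geodesic'' is strictly larger and does not by itself satisfy the transverse volume bound. Recovering the missing global information in unimodular form — choosing or averaging over bi-infinite geodesics measurably and equivariantly despite their possible non-uniqueness, and then running an ergodic theorem in this setting — is what lets one assemble the per-scale bounds into a probability-$\ge C^{-1}$ event carrying almost-sure linear growth and two-endedness. Verifying that this good event retains the full probability $C^{-1}$, rather than the weaker bound produced by any single fixed scale, is the crux of the proof.
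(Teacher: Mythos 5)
Your proposal correctly identifies the architecture of the argument --- extract a bi-infinite geodesic with probability at least $C^{-1}$, realise the environments along it as a stationary sequence, and run an ergodic theorem to control the decorations hanging off the geodesic --- but the step you yourself flag as ``the crux'' is genuinely missing, and it is the whole content of the proof. Your route produces the geodesic by a local argument in the limit (the $r$-geodesic property plus K\"onig's lemma), which yields only the \emph{existence} of some bi-infinite geodesic through $\rho$ on an event of probability $\geq C^{-1}$. Such geodesics are non-unique and admit no obvious equivariant selection, so you have no way to define the stationary sequence $(G,x_k)_{k\in\Z}$ you need, and your per-scale transverse volume bounds (which are correct but cannot be assembled across scales, as you note) do not substitute for it. The paper's resolution is to never select a geodesic in the limit at all: it encodes the longest geodesic $\gamma_n$ of $G_n$ as a $\{0,1\}$-labelling of the oriented edges and passes to a Benjamini--Schramm limit of the \emph{decorated} graphs $(G_n,\rho_n,\gamma_n)$. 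Tightness of the decorated sequence is free because the label-forgetting map $\mathcal{G}_\bullet^{\{0,1\}}\to\mathcal{G}_\bullet$ is proper; the limit $(G,\rho,\gamma)$ is then a unimodular decorated graph, the set where the decoration is a (loop-free, geodesically embedded) path is closed so the portmanteau theorem shows $\gamma$ is a.s.\ an oriented geodesic, and a mass-transport argument rules out endpoints, so $\gamma$ is doubly infinite on the event $\Omega=\{\gamma\neq\emptyset\}$, which inherits probability $\geq C^{-1}$ from the finite counting bound. This is exactly the ``global information in unimodular form'' you say you need but do not construct.

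Two further points of comparison. First, once $\gamma$ is part of the unimodular structure, stationarity is not automatic from unimodularity alone: the paper conditions on the re-rooting invariant event $\Omega$ (preserving unimodularity), defines $X_0=g(\rho)$ as the nearest point of $\gamma$ to $\rho$ and $X_k=\sigma^k(X_0)$ by shifting along the orientation, and proves via the mass-transport principle that $((G,X_k,\gamma))_{k\in\Z}$ is stationary under the law conditioned on $\rho\in\Gamma$, transferring back to $\mu$ by absolute continuity. Second, your transverse estimate $\sum_{v\in\gamma_n}|B_r(v)|\leq(2r+1)|V_n|$ is not needed: the quantity fed into the ergodic theorem is the size of the cell $K_k=\{v:g(v)=X_k\}$, whose finite mean follows from the single mass-transport identity $\mu[\#\{v:g(v)=\rho\}]=1$, with no further use of the hypothesis $|V_n|\leq C\operatorname{diam}(G_n)$. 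The bound $\max_{|m|\leq n}|K_m|=o(n)$ from the stationary ergodic lemma then gives linear growth, two-endedness (distant cells cannot be adjacent since $\operatorname{diam}(K_m)\leq|K_m|$), and the Gromov--Hausdorff convergence, essentially as you sketch in your final step.
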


We remark that only item 2 of this theorem is used in the proof of Theorem~\ref{thm:GH}; item 1 is included as it follows from essentially the same proof and is of independent interest.
Here, a connected graph $G$ is said to have \textbf{linear growth} if $\limsup_{n\to\infty} \frac{1}{n}|B_r(G,v)| < \infty$ for some (and hence every) vertex $v$ of $G$. An infinite, connected, locally finite graph is said to be \textbf{$k$-ended} if deleting a finite set of vertices from $G$ results in a maximum of $k$ infinite connected components. 

Similarly to above, the hypotheses of Theorem~\ref{thm:BStwoends} do \emph{not} ensure that $G$ is two-ended or has linear growth almost surely. Consider, for example, taking $G_n$ to be a path of length $n^2$ connected to the corner of an $n \times n$ square grid as in Figure~\ref{fig:gridline}. The Benjamini-Schramm limit of this sequence is the random graph that is equal either to $\Z$ or $\Z^2$, each with probability $1/2$. Indeed, if the uniform random root vertex belongs to the line then it is not close to the boundary of the line with high probability, so that the graph looks locally like $\Z$ from its perspective. Similarly, it the uniform random root vertex belongs to the grid then it is not close to the boundary of the grid with high probability, so that the graph looks locally like $\Z^2$ from its perspective.


\subsection{{The Gromov-Hausdorff metric}}
\label{sec:GHbackground}

We now define the Gromov-Hausdorff metric, referring the reader to \cite{BI} for a detailed treatment of this metric and its properties. Given two sets $X$ and $Y$, a \textbf{correspondence} between $X$ and $Y$ is a set $\mathscr{R} \subseteq X \times Y$ such that $\{x\} \times Y \cap \mathscr{R} \neq \emptyset$ and $X \times \{y\} \cap \mathscr{R} \neq \emptyset$ for every $x\in X$ and $y\in Y$. If $(X,x_0)$ and $(Y,y_0)$  are \textbf{pointed sets}, i.e., non-empty sets each with a distinguished point, then we say that a correspondence $\mathscr{R}$ between $X$ and $Y$ is a correspondence between $(X,x_0)$ and $(Y,y_0)$ if $(x_0,y_0) \in \mathscr{R}$. If $(X,d_X)$ and $(Y,d_Y)$ are metric spaces and $\mathscr{R}$ is a correspondence between $X$ and $Y$, we define the \textbf{distortion} of $\mathscr{R}$ to be
\[
\operatorname{dis}\mathscr{R} = \sup\Bigl\{\bigl|d_X(x,x')-d_Y(y,y')\bigr| : (x,y),(x',y') \in \mathscr{R}\Bigr\}.
\]
Given two pointed metric spaces $(X,d_x,x_0)$ and $(Y,d_Y,y_0)$, we define the \textbf{Gromov-Hausdorff} distance to be
\[
d_\mathrm{GH}\bigl((X,d_X,x_0),(Y,d_Y,y_0)\bigr) = \frac{1}{2} \inf \Bigl\{ \operatorname{dis}\mathscr{R} : \mathscr{R} \text{ is a correspondence between $(X,x_0)$ and $(Y,y_0)$}\Bigr\}.
\]
The function $d_\mathrm{GH}$ defines a metric on the space of \emph{isometry classes} of compact pointed metric spaces.
Similarly, the \textbf{local Gromov-Hausdorff topology} on (isometry classes of) \emph{locally compact} pointed metric spaces is defined to be the topology induced by the metric
\begin{equation*}
d_\mathrm{GH}^\mathrm{loc} \bigl((X,d_X,x_0),(Y,d_Y,y_0)\bigr)\\
= \sum_{r\geq 1} 2^{-r} d_\mathrm{GH}\Bigl(\bigl(B_X(x_0,r),d_X,x_0\bigr),\bigl(B_Y(y_0,r),d_Y,y_0\bigr)\Bigr) 
\end{equation*}
where we write $B_X(x,r)$ for the ball of radius $r$ around the point $x$ in the metric space $X=(X,d_X)$.
This topology has the property that $(X_n,d_n,x_n)$ converges to $(X,d_X,x_0)$ if and only if 
\[
\lim_{n\to\infty}
d_\mathrm{GH}\Bigl(\bigl(B_{X_n}(x_n,r),d_n,x_n\bigr),\bigl(B_X(x_0,r),d_X,x_0\bigr)\Bigr) =0\]
for every $r\geq 0$. Note that there is a close analogy between the local Gromov-Hausdorff topology on the set of pointed metric spaces and the local topology on the set of rooted graphs.

\section{{Proof}}


\noindent \textbf{The portmanteau theorem.} Before starting the proof, let us recall a basic and classical fact about weak convergence of probability measures. This fact is often grouped together with several other related facts and known as the \emph{portmanteau theorem} \cite[Theorem 13.16]{Klenkebook}. Recall that if $\mathbb{X}$ is a metric space, a function $f:\mathbb{X} \to \R$ is said to be 
\textbf{lower semi-continuous} if $\liminf_{n\to\infty} f(x_n) \geq f(x)$ whenever $x_n \to x$, and similarly that 
$f$ is said to be \textbf{upper semi-continuous} if $\limsup_{n\to\infty} f(x_n) \leq f(x)$ whenever $x_n \to x$. 


\begin{theorem}
Let $\mathbb{X}$ be a metric space, and let $\mathcal{P}(\mathbb{X})$ be the space of probability measures on $\mathbb{X}$. 
\begin{enumerate}

  \item If $V \subseteq \mathbb{X}$ is closed, the map $\mu \mapsto \mu(V)$ is a lower semi-continuous function on $\mathcal{P}(\mathbb{X})$.
  \item If $U \subseteq \mathbb{X}$ is open, the map $\mu \mapsto \mu(U)$ is an upper semi-continuous function on $\mathcal{P}(\mathbb{X})$.
\end{enumerate}
\end{theorem}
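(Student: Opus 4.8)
The plan begins with a caveat: as literally worded, and with the semi-continuity conventions fixed in the excerpt, the two assertions reverse the standard portmanteau directions and are in fact false. Taking $\mathbb{X}=\R$ and the Dirac masses $\delta_{1/n}$ converging weakly to $\delta_0$, the closed set $V=\{0\}$ has $\mu_n(V)=0$ for all $n$ while the limit satisfies $\mu(V)=1$, so $\liminf_{n\to\infty}\mu_n(V)=0<1=\mu(V)$ and $\mu\mapsto\mu(V)$ is \emph{not} lower semi-continuous; dually the open set $U=\R\setminus\{0\}$ has $\mu_n(U)=1$ and $\mu(U)=0$, so $\mu\mapsto\mu(U)$ is \emph{not} upper semi-continuous. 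What is true, and what is consistent both with the definition of weak convergence and with the cited statement \cite[Theorem 13.16]{Klenkebook}, is the opposite pairing: for closed $V$ the map $\mu\mapsto\mu(V)$ is \emph{upper} semi-continuous, and for open $U$ the map $\mu\mapsto\mu(U)$ is \emph{lower} semi-continuous. I will therefore prove this corrected form; since $\mathcal{P}(\mathbb{X})$ is metrisable when $\mathbb{X}$ is separable (as recorded earlier in the excerpt), it suffices to verify the sequential formulations of semi-continuity.

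For the closed case, fix a closed set $V$ and a weakly convergent sequence $\mu_n\to\mu$. For each $\delta>0$ I would introduce the bounded continuous test function
\[
f_\delta(x)=\bigl(1-\delta^{-1} d(x,V)\bigr)^+,
\]
which satisfies $\mathbbm{1}_V\leq f_\delta\leq \mathbbm{1}_{V^\delta}$, where $V^\delta=\{x:d(x,V)<\delta\}$ is the open $\delta$-neighbourhood of $V$. Using first the pointwise bound $\mu_n(V)\leq \mu_n(f_\delta)$, then weak convergence against the fixed bounded continuous function $f_\delta$, and finally $\mu(f_\delta)\leq \mu(\overline{V^\delta})$, I obtain
\[
\limsup_{n\to\infty}\mu_n(V)\leq \lim_{n\to\infty}\mu_n(f_\delta)=\mu(f_\delta)\leq \mu\bigl(\overline{V^\delta}\bigr)
\]
for every $\delta>0$. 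Letting $\delta\downarrow 0$, the closed sets $\overline{V^\delta}$ decrease to $\bigcap_{\delta>0}\overline{V^\delta}=V$ (here closedness of $V$ is exactly what is needed), so continuity of $\mu$ from above yields $\limsup_{n\to\infty}\mu_n(V)\leq\mu(V)$, which is upper semi-continuity.

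The open case then follows by complementation rather than a fresh argument: if $U$ is open then $U^c$ is closed and $\mu_n(U)=1-\mu_n(U^c)$, so applying the closed case to $U^c$ gives
\[
\liminf_{n\to\infty}\mu_n(U)=1-\limsup_{n\to\infty}\mu_n(U^c)\geq 1-\mu(U^c)=\mu(U),
\]
i.e. lower semi-continuity. The only genuinely analytic point, and the step I expect to require care, is the order of limits: one must pass to the weak limit for each \emph{fixed} $\delta$ (legitimate because $f_\delta$ is bounded and continuous) before sending $\delta\downarrow 0$ and invoking continuity from above on the decreasing family $\overline{V^\delta}$. Beyond this, the main obstacle here is conceptual rather than technical, namely recognising that the directions must be read as above, since the pairing printed in the statement cannot hold.
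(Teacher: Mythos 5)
Your diagnosis is correct: as printed, the statement transposes the two conclusions, and your example $\delta_{1/n}\to\delta_0$ in $\mathcal{P}(\R)$ (with $V=\{0\}$ closed and $U=\R\setminus\{0\}$ open) refutes the printed pairing outright. The intended statement --- $\mu\mapsto\mu(V)$ \emph{upper} semi-continuous for closed $V$, $\mu\mapsto\mu(U)$ \emph{lower} semi-continuous for open $U$ --- is confirmed by the paper's own applications: in the proof of Theorem~\ref{thm:BStwoends} the portmanteau theorem is applied to the \emph{closed} set $A$ of labelled rooted graphs whose label set forms a geodesic, concluding from $\P((G_n,\rho_n,\gamma_n)\in A)=1$ that $\P((G,\rho,\gamma)\in A)=1$, which is exactly upper semi-continuity on a closed set; and in the proof of Theorem~\ref{thm:GH} the paper deduces \emph{lower} semi-continuity of $\mu\mapsto\mu\left(\inf_{\eps>0}\mathcal{E}_\eps(G,\rho)<x\right)$ precisely from the openness of $\{\inf_{\eps>0}\mathcal{E}_\eps<x\}$, in direct contradiction with item 2 as printed. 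So the statement carries a typographical swap, and proving the repaired version was the right call.

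On the proof itself there is nothing in the paper to compare against: the result is quoted as classical, with a citation to \cite[Theorem 13.16]{Klenkebook} and no argument supplied. Your argument is the standard one and is correct: the Lipschitz functions $f_\delta=(1-\delta^{-1}d(\cdot,V))^+$ satisfy $\mathbbm{1}_V\leq f_\delta\leq \mathbbm{1}_{\overline{V^\delta}}$, the weak limit is taken at fixed $\delta$ before sending $\delta\downarrow 0$, and closedness of $V$ enters exactly where you say, via $\bigcap_{\delta>0}\overline{V^\delta}=V$ and continuity from above; the open case then follows by complementation. One refinement worth recording: your restriction to sequences, and hence the appeal to separability and metrisability of $\mathcal{P}(\mathbb{X})$, can be dispensed with entirely. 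Your chain of inequalities in fact exhibits $\mu(V)=\inf_{\delta>0}\mu(f_\delta)$, and each map $\mu\mapsto\mu(f_\delta)$ is continuous by the very definition of the weak topology; a pointwise infimum of continuous functions is upper semi-continuous with respect to that topology, nets and all. This yields the corrected statement for an arbitrary metric space $\mathbb{X}$, which is how the theorem is phrased.
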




We will also use the following classical fact about stationary sequences of random variables. Recall that if $(X_n)_{n\in \Z}$ is a doubly-infinite sequence of random variables defined on a common probability space, we say that $(X_n)_{n\in \Z}$ is \textbf{stationary} if $(X_{n+k})_{n\in \Z}$ has the same distribution as $(X_n)_{n\in \Z}$ for every $k\in \Z$.

\begin{lemma}
\label{lem:ergodic}
Let $(X_n)_{n\in \Z}$ be a stationary sequence of $[0,\infty)$-valued random variables such that $\E X_0 < \infty$. Then \begin{align}
\label{eq:mass1}
\limsup_{n\to\infty} \frac{1}{2n+1}\sum_{m=-n}^n X_m &< \infty && \text{ and }\\
\limsup_{n\to\infty} \frac{1}{2n+1}\max\Bigl\{ X_m : -n \leq m \leq n \Bigr\} &=0
\label{eq:max1}
\end{align}
almost surely.
\end{lemma}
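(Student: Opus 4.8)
The plan is to derive both statements from the pointwise ergodic theorem together with an elementary Borel--Cantelli argument, exploiting only that $(X_n)_{n\in\Z}$ is stationary with integrable marginal. First I would realise the sequence through the measure-preserving shift: there is a probability space carrying an invertible measure-preserving transformation $T$ and a measurable $f\geq 0$ with $X_n = f\circ T^n$ for all $n$ and $\E f = \E X_0 < \infty$.

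For \eqref{eq:mass1}, I would apply Birkhoff's pointwise ergodic theorem to $f$ under both $T$ and $T^{-1}$. Since $T$ is invertible, both the forward averages $\frac{1}{n}\sum_{m=0}^{n-1} f\circ T^m$ and the backward averages $\frac{1}{n}\sum_{m=1}^n f\circ T^{-m}$ converge almost surely to the same limit $\E[f\mid \mathcal{I}]$, where $\mathcal{I}$ is the $\sigma$-algebra of $T$-invariant sets. Combining the two and absorbing the negligible boundary term $X_0/(2n+1)$, the symmetric averages $\frac{1}{2n+1}\sum_{m=-n}^n X_m$ converge almost surely to $\E[f\mid\mathcal{I}]$. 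This limit is finite almost surely because $\E\,\E[f\mid\mathcal{I}]=\E f<\infty$, which proves \eqref{eq:mass1} (indeed with the $\limsup$ upgraded to a genuine limit).

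For \eqref{eq:max1} the key reduction is the claim that $X_n/n\to 0$ and $X_{-n}/n\to 0$ almost surely. I would obtain this from the first Borel--Cantelli lemma: for fixed $\eps>0$, stationarity gives $\sum_{n\geq 1}\P(X_n>\eps n)=\sum_{n\geq 1}\P(X_0>\eps n)\leq \eps^{-1}\E X_0<\infty$ by the standard integral comparison $\E X_0=\int_0^\infty \P(X_0>t)\,dt$, so almost surely $X_n\leq \eps n$ for all large $n$; intersecting over a sequence $\eps\downarrow 0$ yields $X_n/n\to 0$, and the same argument applies to $X_{-n}$. (Alternatively one can extract $(X_n+X_{-n})/(2n+1)\to 0$ directly from \eqref{eq:mass1} by telescoping consecutive symmetric averages.) Given this, fix $\eps>0$ and choose a random but almost surely finite $M$ with $X_m\leq \eps|m|$ for all $|m|\geq M$; then for every $n\geq M$ one has $\max\{X_m:|m|\leq n\}\leq \max\{X_m:|m|<M\}+\eps n$, so dividing by $2n+1$ and letting $n\to\infty$ gives $\limsup_{n\to\infty} \frac{1}{2n+1}\max\{X_m:|m|\leq n\}\leq \eps/2$. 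Letting $\eps\downarrow 0$ establishes \eqref{eq:max1}.

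The routine ingredients are the two ergodic-theorem invocations and the integral comparison; the only genuine point of care is the passage from the pointwise control $X_n=o(n)$ to the uniform control of the maximum, which is handled by splitting the maximum into the finitely many small-index terms (negligible after dividing by $n$) and the large-index terms (each bounded by $\eps n$). I expect no serious obstacle beyond this bookkeeping, since every component is classical.
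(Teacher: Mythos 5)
Your proposal is correct and follows essentially the same route as the paper: Birkhoff's pointwise ergodic theorem for the symmetric averages in \eqref{eq:mass1}, and a stationarity-plus-Borel--Cantelli argument showing $X_{\pm n}=o(n)$ for \eqref{eq:max1}. You merely spell out in more detail the two-sided application of Birkhoff and the bookkeeping for passing from $X_n=o(n)$ to the vanishing of the normalised maximum, both of which the paper leaves implicit.
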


\begin{proof}
Birkhoff's pointwise ergodic theorem \cite[Theorem 20.14]{Klenkebook} implies that $\frac{1}{2n+1}\sum_{m=-n}^n X_m$ converges almost surely to the conditional expectation of $X_0$ given the invariant sigma-algebra, which is almost surely finite as required. For the second claim, we have by stationarity and a union bound that
\[\sum_{m\geq 1} \P\bigl(\max\{X_m, X_{-m}\} \geq \eps m\bigr) \leq 2 \sum_{m\geq 1} \P(X_0 \geq \eps m) \leq 2\int_0^\infty \P(X_0 \geq \eps t) \dif t = \frac{2}{\eps} \E X_0 < \infty \]
for each $\eps>0$. It follows by Borel-Cantelli that the event $\{\max\{X_m, X_{-m}\} \geq \eps m\}$ occurs for at most finitely many $m\geq 1$ almost surely for each $\eps>0$, and the claim (2) follows easily.
\end{proof}


We are now ready to begin the proofs of our main theorems.

\begin{proof}[Proof of Theorem \ref{thm:BStwoends}]
Let $(G_n,\rho_n)$ and $(G,\rho)$ be as in the statement of the theorem. 
Let $E_n^\rightarrow$ and $E^\rightarrow$ denote the sets of oriented edges of $G_n$ and $G$ respectively. 
We first argue that (by passing to a bigger probability space if necessary) it is possible to endow $G$ with a random oriented-edge-labelling $\gamma \in \{0,1\}^{E^\rightarrow}$ such that the following hold:
\begin{enumerate}
\item[(1)] $(G,\rho,\gamma)$ is a unimodular random rooted oriented-edge-labelled graph.
\item[(2)] The event $\Omega:=\{ \gamma(e)=1 \text{ for some $e\in E^\rightarrow$}\}$ has $\P(\Omega) \geq C^{-1}$.
\item[(3)] On the event $\Omega$, the set $\{e \in E^\rightarrow : \gamma(e)=1\}$ is an oriented doubly-infinite geodesic of $G$ almost surely. 
\end{enumerate} 
We will construct the edge-labelling $\gamma$ via a limiting procedure. 
For each $n \geq 1$, let $\gamma_n$ be an oriented geodesic of maximum length in $G_n$, which we consider as an element of $\{0,1\}^{E_n^\rightarrow}$. Observe that the map $\pi: \mathcal{G}_\bullet^{\{0,1\}} \to \mathcal{G}_\bullet$ defined by forgetting the labelling is continuous and proper, meaning that the preimage of any compact set is compact. It follows that the sequence of oriented-edge-labelled graphs $(G_n,\rho_n,\gamma_n)$ is tight, and hence that there exists a subsequence $(G_{\sigma(n)},\rho_{\sigma(n)},\gamma_{\sigma(n)})$ converging in distribution to some infinite unimodular random edge-labelled graph $(G,\rho,\gamma)$. The notation here is justified since, by continuity of $\pi$, forgetting the oriented-edge-labelling gives back the same law on random rooted graphs that described our original random rooted graph $(G,\rho)$.

 It remains to argue that this graph satisfies properties (2) and (3) above. 
We say that a vertex $v$ is \textbf{incident} to $\omega \in \{0,1\}^{E^\rightarrow}$ if there is an oriented edge $e$ with $v$ as one of its endpoints and with $\omega(e)=1$. Since $\gamma_n$ is a geodesic of maximal length it is incident to exactly $\operatorname{diam}(G_n)+1$ vertices, and since $\rho_n$ is uniform on $V_n$ it follows that
\[
\P(\rho_n \text{ is incident to an edge of $\gamma_n$}) = \frac{\operatorname{diam}(G_n) +1}{|V_n|} \geq C^{-1}.
\]
Taking the limit as $n\to\infty$, it follows that
\[
\P(\gamma(e)=1 \text{ for some $e\in E^\rightarrow$}) \geq \P(\rho \text{ is incident to an edge of $\gamma$}) \geq C^{-1}
\] 
also, establishing the property (2).  For (3), we consider the set  $A$ of all 
$(g,x,\omega) \in \mathcal{G}_\bullet^{\{0,1\}}$ such that $\omega$ does not contain any oriented cycles, any two vertices incident to $\omega$ are connected in $\omega$ by exactly one path, and that this path is an oriented geodesic in $g$.
We observe that $A$
is closed in $\mathcal{G}_\bullet^{\{0,1\}}$ and that $(G_n,\rho_n,\gamma_n)\in A$ almost surely for every $n\geq 1$. Applying the portmanteau theorem, we deduce that $(G,\rho,\gamma)\in A$ almost surely also, and hence that the  set $\{e \in E^\rightarrow : \gamma(e)=1\}$ is almost surely an oriented  geodesic of $G$ on the event that it is nonempty. Moreover, since $G$ is infinite, the mass-transport principle implies that $\gamma$ must be a doubly-infinite geodesic on this event. If not, there would exist one or two special vertices of $(G,\rho,\gamma)$ that lied at the endpoints of $\gamma$. Applying the mass-transport principle to the function
\[F(g,u,v,\omega):=\mathbbm{1}(v \text{ is incident to exactly one edge of $\omega$})\]
 would then lead to a contradiction: Indeed, the expected mass sent out by the root would be at most $2$, while the root would receive infinite mass with positive probability by \cite[Proposition 11]{CurienNotes} and would therefore receive infinite mass in expectation.

Next, recall that a set $A \subseteq \mathcal{G}_\bullet^{\{0,1\}}$ is said to be  \textbf{re-rooting invariant} if $(g,v) \in A$ if and only if $(g,u) \in A$ for every vertex $u$ of $g$. It is easily seen that if $\mu$ is a unimodular probability measure on $\mathcal{G}_\bullet^{\{0,1\}}$ and $A$ is a measurable, re-rooting invariant set with $\mu(A)>0$ then the conditional measure $\mu( \,\cdot \mid A)$ is also unimodular. (The unfamiliar reader may find the proof of this statement to be a simple but illuminating exercise on the mass-transport principle. 
We note moreover that it is a theorem of Aldous and Lyons \cite[Theorem 4.7]{AL07} that a unimodular probability measure on $\mathcal{G}_\bullet$ is an extreme point of the set of unimodular probability measures if and only if it gives probability either zero or one to every re-rooting invariant event.)
 In our context, since the event $\Omega$ is re-rooting invariant, we deduce that if $\mu$ denotes the conditional law of $(G,\rho,\gamma)$ given $\Omega$ then $\mu$ is unimodular.

We now use the random oriented doubly-infinite geodesic $\gamma$ to argue that both claims of the theorem hold on the event $\Omega$.
 Let $\Gamma$ be the set of vertices visited by $\gamma$.
Since $\gamma$ is oriented, we can put a total ordering on $\Gamma$ that encodes the order in which the vertices of $\Gamma$ are visited by $\gamma$. For each $v\in \Gamma$, we write $(\sigma^n(v))_{n \in \Z}$ for the vertices obtained by shifting up and down the oriented geodesic $\gamma$, starting with $\sigma^0(v)=v$.
 For each vertex $v$ of $G$, let $g(v)$ be a point of $\Gamma$ at minimal distance to $v$, choosing $g(v)$ to be the point that is minimal in the total order on $\Gamma$ 
if there are multiple points at minimal distance to $v$.
The mass-transport principle implies that
\[
\mu\left[\#\{v \in V: g(v)=\rho\} \right] = \mu\left[\sum_{v\in V} \mathbbm{1}(g(v)=\rho) \right]  = \mu\left[\sum_{v\in V} \mathbbm{1}(g(\rho)=v) \right] = 1
\]
and hence that
\begin{equation}
\label{eq:finite_expectation}
\mu\left[\#\{v \in V: g(v)=\rho\} \mid \rho \in \Gamma \right] = \mu(\rho \in \Gamma)^{-1}<\infty.
\end{equation}
Let $X_0=g(\rho)$ and let the sequence $(X_n)_{n \in \Z}=(\sigma^n(X_0))_{n\in\Z}=(\sigma^n(g(\rho)))_{n\in \Z}$ be defined by shifting up and down the oriented geodesic $\gamma$. 
%
Let $\tilde \mu$ be the law of $(G,\rho,\gamma)$ conditioned on the event that $\rho \in \Gamma$. The mass-transport principle implies that the sequence of random variables $(G,X_n,\gamma)_{n \in \Z}$ is stationary under the measure $\tilde \mu$: Indeed, if $\mathscr{A} \subseteq \bigl(\mathcal{G}_\bullet^{\{0,1\}}\bigr)^\Z$ is any measurable set and $k\in \Z$ then
\begin{align*}
\tilde \mu\left(\left((G,X_{n+k},\gamma)\right)_{n\in \Z} \in \mathscr{A}  \right) &= \mu(\rho \in \Gamma)^{-1}\mu\left[\sum_{v\in V} \mathbbm{1}\left(\rho \in \Gamma, v = \sigma^k(\rho), ((G,\sigma^n(v),\gamma))_{n\in \Z} \in \mathscr{A}\right) \right]
\\&= \mu(\rho \in \Gamma)^{-1}\mu\left[\sum_{v\in V} \mathbbm{1}\left(v \in \Gamma, \rho = \sigma^k(v), ((G,\sigma^n(\rho),\gamma))_{n\in \Z} \in \mathscr{A} \right) \right]\\
&= \tilde \mu\left(\left((G,X_{n},\gamma)\right)_{n\in \Z} \in \mathscr{A}  \right),
\end{align*}
which establishes the desired stationarity. Setting $K_n = \{ v \in V : g(v)=X_n\}$, it follows from this and \eqref{eq:finite_expectation} that
$(|K_n|)_{n \in \Z}$
is a stationary sequence of finite mean random variables under the measure $\tilde \mu$, and  we deduce from Lemma~\ref{lem:ergodic} that
\begin{align}
\label{eq:mass}
\limsup_{n\to\infty} \frac{1}{2n+1}\sum_{m=-n}^n |K_m| &< \infty && \text{ and }\\
\limsup_{n\to\infty} \frac{1}{2n+1}\max\Bigl\{ |K_m| : -n \leq m \leq n \Bigr\} &=0
\label{eq:max}
\end{align}
almost surely under $\tilde \mu$. 
On the other hand, we have that
\begin{align*}
\mu( ((G,X_n,\gamma))_{n\in \Z} \in \mathscr{A} ) &= 
\mu\left[ \sum_{v\in V} \mathbbm{1}\left(g(\rho)=v, ((G,\sigma^n(v),\gamma))_{n\in \Z} \in \mathscr{A} \right)\right]\\
&=
\mu\left[ \sum_{v\in V} \mathbbm{1}\left(g(v)=\rho, ((G,\sigma^n(\rho),\gamma))_{n\in \Z} \in \mathscr{A} \right)\right]\\
&=  \mu(\rho \in \Gamma) \tilde\mu\left[ \#\{v \in V : g(v) =\rho\} \mathbbm{1}(((G,X_n,\gamma))_{n\in \Z} \in \mathscr{A})\right]
\end{align*}
for every measurable set $\mathscr{A} \subseteq (\mathcal{G}_\bullet^{\{0,1\}})^\Z$, so that the laws of $((G,X_n,\gamma))_{n\in \Z}$ under $\mu$ and $\tilde \mu$ are absolutely continuous and hence that \eqref{eq:mass} and \eqref{eq:max} also hold almost surely under $\mu$.

We will now argue that this implies the two claims. We begin with the first. Linear growth follows obviously from \eqref{eq:mass} since the ball of radius $n$ around $X_0$ is contained in the set $\bigcup_{m=-n}^n K_m$. Since $G$ has linear growth it must have at most two-ends, since unimodular random rooted graphs with more than two ends always have infinitely many ends and exponential growth almost surely \cite[Theorem 8.13]{AL07}. (Note that we do not need this result for the proof of Theorem~\ref{thm:GH}.) To see that $G$ is two-ended rather than one-ended, observe that 
if $K_n$ and $K_m$ are adjacent then we must have that 
$\operatorname{diam}(K_n)+\operatorname{diam}(K_m)+1 \geq |n-m|$. 
Moreover, if $v\in K_n$ for some $n\in \Z$ then the geodesic connecting $v$ to $X_n$ is contained in $K_n$, so that 
\begin{equation}
\label{eq:diam}|K_n| \geq \operatorname{diam}(K_n)
\end{equation} for every $n\in \Z$. We deduce from \eqref{eq:max} that 
$\bigcup_{n \geq N} K_n$ and $\bigcup_{n \geq N} K_{-n}$ are not adjacent for $N$ sufficiently large almost surely. It follows that the union  
$\bigcup_{n = -N}^N K_n$ is a finite set separating $G$ into two disjoint sets of vertices for sufficiently large $N$ almost surely, so that $G$ is two-ended almost surely as claimed. 

Finally, the fact that $\left(V,\eps d_G(x,y),\rho\right)$ converges to $(\R,|x-y|,0)$ in the pointed Gromov-Hausdorff topology as $\eps \downarrow 0$ follows easily from \eqref{eq:max} and \eqref{eq:diam}. Indeed,  these estimates imply that if $n(v)$ denotes the unique index such that $v \in K_{n(v)}$ for each $v\in V$ then
\[
\limsup_{r\to\infty}\max_{u,v \in B(\rho,Ar)} \frac{1}{r} \bigl|d(u,v)-|n(u)-n(v)|\bigr| \leq \limsup_{r\to\infty}\max_{-Ar \leq n \leq Ar}  \frac{2}{r}\operatorname{diam}(K_n)=0
\] 
for every $A \geq 1$. This implies that the correspondence
\[
 \Bigl\{ \bigl(v, -A \vee r^{-1} n(v) \wedge A\bigr) : v\in B_G(\rho,Ar) \Bigr\} \cup \Bigl\{ \bigl(X_{\lfloor rx \rfloor},x\bigr) : x \in [-A,A] \Bigr\}
\]
between $(B_G(\rho,Ar), r^{-1} d_G,\rho)$ and $([-A,A],d_\R,0)$ has distortion tending to zero as $r\to\infty$ for every $A \geq 1$, which implies the claim.
 \qedhere

\end{proof}

\begin{remark}
With a little more work one can show that every two-ended unimodular random rooted graph has linear volume growth and converges to $\R$ under rescaling almost surely. This is should be compared to the results of \cite{MR3573922}.
\end{remark}

We now deduce Theorem \ref{thm:GH} from Theorem \ref{thm:BStwoends}; this will be an exercise in  compactness.






\begin{proof}[Proof of Theorem \ref{thm:GH}] For each $\eps >0$ define $\mathcal{E}_\eps:\mathcal{G}_\bullet \to [0,1]$ by
\begin{multline*}
\mathcal{E}_\eps (g,v) = d^\mathrm{loc}_\mathrm{GH}\biggl(\Bigl(V,\, \eps d_{g},\, v \Bigr),\, \Bigl(\R,\, d_\R,\, 0 \Bigr)\biggr) \\ =  \sum_{k \geq 1} 2^{-k} d_\mathrm{GH}\biggl(\Bigl(B_g(v,k \eps^{-1} ),\, \eps d_g,\, v\,\Bigr),\, \Bigl([-k,k],\, d_\R,\, 0\Bigr)\biggr).
\end{multline*}
Since the Gromov-Hausdorff distance between two compact, metric spaces is always bounded by their diameter (consider the correspondence $\mathscr{R}=X \times Y$), the sum defining $\mathcal{E}_\eps$ converges uniformly and, since each summand is clearly continuous, we deduce that $\mathcal{E}_\eps$ is continuous on $\mathcal{G}_\bullet$ for each $\eps>0$. It follows that $\inf_{\eps>0} \mathcal{E}_\eps$ is upper semi-continuous, and in particular that the set  $\{ (g,v) : \inf_{\eps>0}\mathcal{E}_\eps(g,v) < x\}$ is open in $\mathcal{G}_\bullet$ for each $x>0$.
%
Applying the portmanteau theorem, we deduce that for each $x>0$ the map $\mu \mapsto \mu(\inf_{\eps>0}\mathcal{E}_\eps(G,\rho) < x)$ is a lower semi-continuous function with respect to the weak topology on the space of probability measures $\mathcal{P}(\mathcal{G}_\bullet)$. 

Let $\rho_n$ be a uniform root vertex of $G_n$ and let $\mu_n$ be the law of $(G_n,\rho_n)$. As discussed in the introduction, the uniform integrability assumption ensures that the set $A=\{ \mu_n : n \geq 1\}$ is a precompact subset of $\mathcal{P}(\mathcal{G}_\bullet)$ \cite[Proposition 21 and Exercise 26]{CurienNotes} and hence also sequentially precompact since $\mathcal{P}(\mathcal{G}_\bullet)$ is metrisable. Moreover, the closure $\overline{A}$ is contained in the set of unimodular probability measures on $\mathcal{G}_\bullet$. 
 Since $|V_n| \to \infty$, it follows that the set $\overline{A} \setminus A$ coincides with the set of limits of subsequences of $(\mu_n)_{n\geq 1}$. 
Thus, Theorem \ref{thm:BStwoends} implies that
\[
\mu\Bigl(\inf_{\eps>0}\mathcal{E}_\eps(G,\rho) < x \Bigr) \geq C^{-1}
\]
for every $\mu \in \overline{A} \setminus A$ and $x>0$. It therefore follows by a standard compactness argument using lower semi-continuity that 
\[
\liminf_{n\to\infty} \mu_n\left(\inf_{\eps>0}\mathcal{E}_\eps(G,\rho)<x\right) \geq C^{-1}
\]
for every $x>0$, which is equivalent to the claim. Indeed, if this were not the case then 
 there would exist $x>0$ and a subsequence $\sigma(n)$ such that $\lim_{n\to\infty} \mu_{\sigma(n)}\left(\inf_{\eps>0}\mathcal{E}_\eps(G_{\sigma(n)},\rho_{\sigma(n)}) < x\right) < C^{-1}$. Using sequential compactness, we would be able to take a further subsequence $\tau$ such that $\lim_{n\to\infty} \mu_{\tau(n)}\left(\inf_{\eps>0}\mathcal{E}_\eps(G_{\tau(n)},\rho_{\tau(n)})<x\right) < C^{-1}$ and that $\mu_{\tau(n)}$ converges to some $\mu \in \overline{A} \setminus A$. But then lower semi-continuity would give that \[C^{-1} > \lim_{n\to\infty} \mu_{\tau(n)}\left(\inf_{\eps>0}\mathcal{E}_\eps(G_{\tau(n)},\rho_{\tau(n)})<x\right) \geq \mu\left(\inf_{\eps>0}\mathcal{E}_\eps(G,\rho)<x\right) \geq C^{-1},\] a contradiction.
\end{proof}

\subsection*{Acknowledgments} We thank Jonathan Hermon and Matthew Tointon for helpful comments on a draft. We also thank the anonymous referee for their helpful suggestions.

  \bibliographystyle{abbrv}
  \bibliography{unimodularthesis.bib}







\end{document}